\renewcommand{\L}{\mathcal{L}}
\newcommand{\N}{\mathcal{N}}
\newcommand{\K}{\mathbb{K}}
\newcommand{\C}{\mathbb{C}}
\newcommand{\g}{\mathfrak{g}}
\renewcommand{\v}{\mathfrak{v}}
\newcommand{\z}{\mathfrak{z}}
\newcommand{\n}{\mathfrak{n}}
\newcommand{\h}{\mathfrak{h}}
\renewcommand{\a}{\mathfrak{a}}
\renewcommand{\Im}{\operatorname{Im}}
\newcommand{\nil}{\operatorname{nil}}
\renewcommand{\O}{\mathcal{O}}
\newtheorem{theorem}{Theorem}[section]
\newtheorem{lemma}[theorem]{Lemma}
\newtheorem{corollary}[theorem]{Corollary}
\theoremstyle{definition}
\theoremstyle{remark}
\newtheorem{remark}[theorem]{Remark}
\newtheorem{some-results}[theorem]{Some known results}
\newtheorem{some-questions}[theorem]{Some open questions}
\numberwithin{equation}{section}
\begin{document}

	\title{On the rigidity of $k$-step nilpotent \\ graph Lie algebras}
	\author{Josefina Barrionuevo $\dag$ and Paulo Tirao $\dag$
		$\ddag$}

	\address{$\dag$ CIEM-FaMAF, CONICET-Universidad Nacional de Córdoba \\
		Ciudad Universitaria, 5000 Córdoba, Argentina}
	\address{$\ddag$ Guangdong Technion Israel Institute of Technology \\
		241 Daxue Road, Jinping District, Shantou, Guangdong Province,
		China.}
	\email{paulotirao@gmail.com}

	\date{October, 2025}
	\subjclass[2010]{Primary 17B30; Secondary 17B99}
	\keywords{Nilpotent Lie algebras, graph Lie algebras, deformations, rigidity.}

	\begin{abstract}
		We thoroughly explore the class of $k$-step nilpotent Lie algebras associated with a simple graph
		looking for $k$-step nilpotent Lie algebras which are rigid in the variety of at most $k$-step nilpotent Lie algebras.
		We find out that, besides the complete graph, the only examples arise for $k=2$ and graphs of at most 4 vertices.

		A key tool to prove non-$k$-rigidity in this context, is a general construction of non-trivial deformations for
		naturally graded nilpotent Lie algebras.
	\end{abstract}

	\maketitle
	
\setcounter{section}{-1}
\section{Introduction}
	
	Rigidity problems about Lie algebras in general, and about nilpotent Lie algebras in particular, are challenging.
	A major open problem is to understand whether there exist rigid nilpotent Lie algebras or not.
	The so-called Vergne's Conjecture predicts that the answer is no.
	
	Some intriguing questions on the nilpotent case are:
	\begin{enumerate}
		\item Are there $k$-step nilpotent Lie algebras which are rigid in the variety of at most $k$-step nilpotent Lie algebras?
		\item Are there $k$-step nilpotent Lie algebras which are rigid in the variety of at most $k+1$-step nilpotent Lie algebras?
		\item Are there nilpotent Lie algebras which are rigid in the variety of nilpotent Lie algebras?
	\end{enumerate}

	We know that the answer to the first questions is yes and we know only two families of such algebras (besides some isolated cases of dimension $<6$).
	Namely, for every $k$ the free $k$-step nilpotent Lie algebras, and Heisenberg Lie algebras for $k=2$ \cite{BTS}.
	We don't know whether to expect many or few.

	We don't know any examples answering positively questions two and three, besides the exceptional cases in dimension $<6$,
	due to the fact that there are only finitely many orbits (isomorphism classes) there.

	In light of the above questions, we explore a class of $k$-step nilpotent Lie algebras that are associated with simple graphs.
	Reasons for considering this class include the fact that they enjoy a pretty simple structure with a strong combinatorial flavor,
	making them accessible for explicit algebraic computations.
	And the fact that the free $k$-step nilpotent Lie algebras belong to this class; they are associated with complete graphs.

	We find two new examples answering positively question one.
	Both of them occur for $k=2$, and are associated with graphs of 4 vertices.
	Moreover, we prove that these are the only ones within the class considered.
	That is, $k$-step nilpotent graph Lie algebras are in general not $k$-rigid,
	with the distinguished exceptions of the free $k$-step nilpotent Lie algebras.
	Notice that the 3-dimensional Heisenberg Lie algebra is the free 2-step nilpotent Lie algebra
	on 2 generators and it is a graph Lie algebra.

	The fact that in general $k$-step graph Lie algebras are not $k$-rigid
	follows as a particular instance of a more general result that we prove.
	We show that a large class of $k$-step naturally graded nilpotent Lie algebras are not $k$-rigid
	by constructing explicitly non-trivial deformations of them.

	\medskip

	We would like to mention that 2-step graph Lie algebras, and more generally, 2-step and 3-step graded Lie algebras have been considered within the framework of
	deformations and contractions in \cite{AAA} and \cite{GR} respectively.

	\medskip

	All Lie algebras in this paper are finite dimensional over an algebraically closed field $\K$ of characteristic zero.

	\section{Preliminaries on $k$-step nilpotent graph Lie algebras}

	The central descending series of a Lie algebra $\g$ is the filtration of ideals
	$\g^1\supseteq \g^2 \supseteq \dots \supseteq \g^k \supseteq \dots$ defined recursively by:
	\[ \g^1=\g \quad\text{and}\quad \g^{i+1}=[\g,\g^i], \quad\text{for $i\ge 1$}. \]
	Note that it satisfies \([\g^i,\g^j]\subseteq \g^{i+j}\), for all \(1\leq i, j\).  
	
	The associated graded Lie algebra is
	\[  \operatorname{gr}(\g)= \bigoplus_{i\ge 1} \g^i/\g^{i+1}.  \]
	Its Lie bracket is given by
	\[ [\overline{x},\overline{y}] = \overline{[x,y]},\]
	for $x\in\g^i$ and $y\in\g^j$, where $\overline{x}$ and $\overline{y}$ denote the cosets in
	$\g^i/\g^{i+1}$ and $\g^j/\g^{j+1}$ respectively, and $\overline{[x,y]}$ is the coset of $[x,y]$ in $\g^{i+j}/\g^{i+j+1}$.
	
	The Lie algebra $\g$ is said to be $k$-step nilpotent if $\g^{k+1}=0$ and $\g^{k}\ne 0$
	and it is said to be \emph{naturally graded} if $\g\simeq \operatorname{gr}(\g)$.

	A Lie algebra $\g$ with a graded decomposition
	\begin{equation}\label{eqn:graded}
		\g=V_1\oplus\dots\oplus V_k,
	\end{equation}
	where $[V_i,V_j]\subseteq V_{i+j}$, is at most $k$-step nilpotent.
	In general, such a $\g$ may admit different graded decompositions.

	If in addition it satisfies that  
	\begin{equation}\label{eqn:carnot}
		\g^i=V_{i}\oplus\dots\oplus V_k,
	\end{equation}
	then it is $k$-step nilpotent and it is naturally graded.
	Such a decomposition is called a \emph{Carnot grading} and it is unique up to isomorphism.
	Every $\g$ with a graded decomposition as in \eqref{eqn:graded} admits a Carnot grading satisfying \eqref{eqn:carnot}.

	\begin{remark}
		Having a Carnot grading and being naturally graded are equivalent.
	\end{remark}

	The free $k$-step nilpotent Lie algebra on $m$ generators, $L_{(k)}(m)$, admits a Carnot grading
	$L_{(k)}(m)=L_1\oplus\dots\oplus L_k$, where $L_1=\langle v_1,\dots,v_m\rangle$ and $L_2=\Lambda^2 L_1$.
	See \cite{BTS} for a thorough description, including Hall bases.

	More general examples are the $k$-step nilpotent Lie algebras $\g(k,G)$ associated with a simple graph,
	considered by Dani and Maikar in \cite{DM} for $k=2$ and by Maikar in \cite{M} for arbitrary $k$.
	In this context, $L_{(k)}(m)=\g(k,K_m)$ where $K_m$ is the complete graph on $m$-vertices.

	Given a simple graph $G=(V,E)$, with vertices $V=\{v_1,\dots,v_m\}$ and edges $E\subseteq \mathcal{P}_2(V)$ (the collection of subsets of $V$ with two elements),
	consider the free $k$-step nilpotent Lie algebra generated by $V$, $L_{(k)}(m)$ and let $I$ be the ideal of $L_{(k)}(m)$ generated by
	$E'=\{ v_i\wedge v_j:\, \{v_i,v_j\}\not\in E \}\subseteq L_2$. $I$ is homogeneous and $I=I_2\oplus\dots\oplus I_k$ where $I_j=I\cap L_j\subseteq L_j$.
	Then,
	\[ \g(k,G)=L_{(k)}(m)/I, \]
	and a Carnot grading of it is given by
	\begin{eqnarray*}\label{eqn:V1k}
		\g(k,G) &=& L_1\oplus L_2/I_2\oplus\dots\oplus L_k/I_k \\
		&=& V_1\oplus V_2\oplus\dots\oplus V_k.
	\end{eqnarray*}

	If the graph $G$ has connected components $G_1,\dots,G_l$, then
	$\g(k,G)\simeq \g(k,G_1)\oplus\dots\oplus\g(k,G_l)$ and in particular,
	each isolated vertex of $G$ corresponds to a 1-dimensional abelian factor of $\g(k,G)$.

	\begin{remark}
		The infinite dimensional (non-nilpotent) analogs of these algebras were considered already in \cite{DK}.
		Given $G$ one considers the free Lie algebra on $m$ generators, $L(m)$, the ideal $I$ of it generated by $E'$
		and then the quotient
		\[
		\g(G)=L(m)/I=L_1\oplus L_2/I_2\oplus\dots\oplus L_k/I_k\oplus\dots .
		\]
		Note that
		\[ \g(k,G)\simeq \g(G)/\g(G)^{k+1}. \]
	\end{remark}

	\subsection{A Hall type basis of $\g(k,G)$}\label{subsec:hall}

	Given a simple graph $G$ let $G'$ be its complementary graph.
	In \cite[Theorem 5.12, Section 5]{W} a combinatorial algorithm with input the ordered set of vertices $v_1<v_2<\dots<v_m$ and final output
	a basis $\{\omega_1,\dots,\omega_n\}$ for $\g(k,G)$ is described.
	Starting from $G'$ it constructs recursively Lyndon elements (represented by words on the vertices) and then Lyndon brackets to end up with a basis for $\g(k,G)$.
	The algorithm provides bases for each $V_j$ (see \eqref{eqn:V1k}) were all the corresponding Lyndon elements are of length $j$.

	As an example (see \cite[Examples 5.13 and 5.16]{W}) consider the simple graph $G$ (left) and its complementary graph $G'$ (right).

	\vskip.4cm

	\begin{center}
		\begin{tikzpicture}[scale=.8]
			\filldraw (3.5,-2) circle (2.5pt);
			\node at (3.1, -2) {$v_1$};
			\filldraw (5,-1) circle (2.5pt);
			\node at (5.4,-1) {$v_2$};
			\filldraw (5,-3) circle (2.5pt);
			\node at (5.4,-3) {$v_3$};
			\draw [line width=1.1pt] (3.5,-2)--(5,-3);
			\draw [line width=1.1pt] (3.5,-2)--(5,-1);

			\filldraw (10.5,-2) circle (2.5pt);
			\node at (10.1, -2) {$v_1$};
			\filldraw (12,-1) circle (2.5pt);
			\node at (12.4,-1) {$v_2$};
			\filldraw (12,-3) circle (2.5pt);
			\node at (12.4,-3) {$v_3$};
			\draw [line width=1.1pt] (12,-1)--(12,-3);
		\end{tikzpicture}
	\end{center}

	In this case the bases obtained for each $V_j$, $j=1,2,3,4$, are
	\begin{eqnarray*}
		& v_1,v_2,v_3 ;\\
		& [v_1,v_2],[v_1,v_3] ;\\
		& [v_1,[v_1,v_2]], [v_1,[v_1,v_3]], [[v_1,v_2],v_2], [[v_1,v_3],v_2], [[v_1,v_3],v_3] ;\\
		& [v_1, [v_1, [v_1, v_2]]], [v_1, [v_1, [v_1, v_3]]], [[[v_1, v_2], v_2], v_2], [[[v_1, v_3], v_2], v_2], \\
		& [[[v_1, v_3], v_3], v_2], [[[v_1, v_3], v_3], v_3], [[v_1, [v_1, v_2]], v_2], [[v_1, [v_1, v_3]], v_2], \\
		& [[v_1, [v_1, v_3]], v_3], [[v_1, v_2], [v_1, v_3]] .
	\end{eqnarray*}

	\begin{remark}
		If $\{v_i,v_j\}$ is an edge and $v_i<v_j$, then $[v_i,[v_i,\dots,[v_i,v_j]\dots]]$, with $k-1$ times $v_i$, is a basis element lying in $V_k$.
	\end{remark}

	Let $m=|V|$ be the number of vertices of $G$.
	Each basis element $\omega$ has a multidegree $d(v)$, the $m$-tuple of multiplicities of each vertex in the underlying word.
	In the example above, $d([[v_1,v_2],v_2])=(1,2,0)$ and $d([[v_1, [v_1, v_3]], v_2])=(2,1,1)$.
	Given two basis elements $\omega_i$ and $\omega_j$, their bracket is
	\[ [\omega_i,\omega_j]=\sum_k c_k \omega_k, \]
	where $c_k\ne 0$ only if
	\begin{equation}\label{eqn:multidegree}
		d(\omega_k)= d(\omega_i)+d(\omega_j).
	\end{equation}

	\section{On the rigidity of nilpotent Lie algebras}

	As a general setting we consider the variety $\L_n$ of complex Lie algebras of dimension $n$,
	the subvariety $\N_n$ of nilpotent Lie algebras, and the subvarieties $\N_{n,k}$ of nilpotent Lie algebras
	which are at most $k$-step nilpotent.
	Note that
	\[ \L_n\supseteq \N_n=\N_{n,n-1}\supseteq \N_{n,n-2}\supseteq\dots\supseteq \N_{n,1}. \]
	
	A $\mu\in\L_n$ is a Lie bracket in $\C^n$ and for $g\in GL(n,\C)$, $g\cdot\mu$ is the Lie bracket
	\[ g\cdot\mu(x,y)=g \mu(g^{-1}x,g^{-1}y). \]
	The orbit of $\mu$ under this action, $\O(\mu)$, is the isomorphism class of $\mu$.
	If $\O(\mu)$ is Zariski open in $\L_n$, $\mu$ is said to be rigid.

	This action preserves the subvarieties $\N_{n,k}$ and a $\mu\in\N_{n,k}$ is said to be \emph{nilpotently rigid}
	if $\O(\mu)$ is open in $\N_n$ and it is said to be \emph{$k$-rigid} if $\O(\mu)$ is open in $\N_{n,k}$.

	\begin{remark}
		On the varieties $\L_n$, $\N_n$ and $\N_{n,k}$, in addition to the Zariski topology, one can also consider the finer Euclidean topology.
		It is well known that an orbit is Euclidean open if and only if it is Zariski open.
	\end{remark}

	Observe that if $\mu$ is not rigid in $\N_{n,k}$, then it is not rigid in $\N_{n,k+i}$ for $i\ge 1$, and also not rigid in $\N_n$ and $\L_n$.

	\bigskip

	We will sometimes write, for convenience, $[x,y]$ instead of $\mu(x,y)$.

	\subsection{Some known results}\label{subsec:known-results}

	The following known results will be referenced in Section 4.

	\begin{enumerate}
		\item {\sl Abelian Lie algebras $\a_n$.} Are not rigid in $\N_{n,k}$ for all $k$, except for $n=1, 2$.
		\item {\sl Dimension $<6$.} Nilpotent Lie algebras of small dimension are classified and it turns out that for $n<6$ there are finitely many isomorphism classes.
		The orbits closures in $\N_n$ are described in \cite{GO1}. It follows that for $n<6$, $\N_n$ has only one rigid algebra.
		Namely, for $n=1,2$ the abelian are rigid in $\N_1$ and $\N_2$ respectively; in $\N_3$ the Heisenbeg Lie algebra $\h_1$ is rigid;
		in $\N_4$ the standard filiform is the only rigid one; and in $\N_5$ there is also only one rigid algebra $\g^6_5$.
		\item  {\sl Graded.} Graded nilpotent Lie algebras as in \eqref{eqn:graded} are never rigid in $\L_n$ \cite{GO2}, but they might be rigid in $\N_{n,k}$ for some $k$.
		\item {\sl Free nilpotent.} The free $k$-step nilpotent Lie algebras are rigid in $\N_{n,k}$.  However they are not rigid in $\N_{n,k+1}$ \cite{BTS}
		(with the only exception of $\h_1$, since $\N_{3,3}=\N_{3,2}$).
		\item {\sl Heisenberg.} The Heisenberg Lie algebras $\h_{m}$ are rigid in $\N_{2m+1,2}$. However they are not rigid in $\N_{2m+1,3}$ \cite{BTS}
		(with the only exception of $\h_1$, since $\N_{3,3}=\N_{3,2}$).
		\item {\sl With an abelian factor.} Nilpotent Lie algebras with an abelian factor are (almost) never rigid. That is, if $\g$ is $k$-step nilpotent
		then $\g\oplus\a_l$ is not $k$-rigid \cite[Theorem 6.10]{BTS}. The only exceptions are $\a_1$, $\a_2$ and $\h_1\oplus\a_1$.
		\item {\sl 2-step graph Lie algebras.} 2-step graph Lie algebras $\g(2,G)$ are never 3-rigid \cite[Theorem 3.7]{BTS} with the only exceptions of $\a_1$, $\a_2$ and $\h_1$.
	\end{enumerate}

	So far as we know, besides the free $k$-step and Heisenberg Lie algebras, there were no other $k$-step nilpotent Lie algebras $k$-rigid of dimension $\ge 6$.

	\subsection{Linear deformations}\label{subsec:linear-def}

	The content of this subsection is from \cite{BTS}.

	Given a Lie algebra $\g$ with bracket $\mu$, if $\sigma$ is an alternating bilinear map from $\g\times\g\to\g$ such that
	\[ \mu_t=\mu+t\sigma \]
	is a Lie bracket for all $t\in\C$, we say that $\mu_t$ is a linear deformation of $\mu$.
	If for all $t\ne 0$ $\mu_t$ is not isomorphic to $\mu$, then $\mu$ is not rigid.

	Given two linearly independent elements $a_1,a_2\in\g$, let $\h$ be any subspace of $\g$ such that
	$\g=\operatorname{span}\{a_1,a_2\}\oplus\h$.
	For any $y\in\g$, we consider the alternating bilinear map $\sigma^{a_1,a_2}_y:\g\times\g\to\g$ defined by
	\begin{gather*}
		\sigma^{a_1,a_2}_y(a_1,a_2)=y, \\
		\sigma^{a_1,a_2}_y(a_1,h)=\sigma^{a_1,a_2}_y(a_2,h)=\sigma^{a_1,a_2}_y(h,h')=0,
	\end{gather*}
	for all $h,h'\in\h$.

	\begin{theorem}[\cite{BTS}]\label{thm:bts}
		Let $\g$ be a nilpotent Lie algebra with bracket $\mu$. Let $\h$ be a codimension 2 subalgebra of $\g$ and let $a_1,a_2$ be such that
		$\g= \langle a_1,a_2\rangle \oplus \h$.
		Then for every $y$ in the centralizer of $\h$, $\mu_t=\mu+t \sigma^{a_1,a_2}_y$ is a linear deformation of $\mu$.
	\end{theorem}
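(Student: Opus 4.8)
The plan is to verify directly that $\mu_t=\mu+t\sigma$ satisfies the Jacobi identity for all $t$, where $\sigma=\sigma^{a_1,a_2}_y$. Since $\mu$ is already a Lie bracket and $\sigma$ is alternating, $\mu_t$ is alternating, so the only thing to check is that the Jacobiator $J_t(x,x',x'')=\mu_t(\mu_t(x,x'),x'')+\text{cyc}$ vanishes identically in $t$. Expanding in powers of $t$, the constant term is the Jacobiator of $\mu$, which vanishes; the coefficient of $t^2$ is the Jacobiator of $\sigma$ alone; and the coefficient of $t$ is the mixed term $\sum_{\text{cyc}}\bigl(\mu(\sigma(x,x'),x'')+\sigma(\mu(x,x'),x'')\bigr)$. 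So the claim reduces to showing that the $t$-linear and $t^2$ terms both vanish.

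First I would dispose of the $t^2$ term. By construction $\sigma$ takes values in $\langle y\rangle$ and kills every pair that involves an element of $\h$; the only pair on which $\sigma$ is possibly nonzero is $(a_1,a_2)$. For $\sigma(\sigma(x,x'),x'')$ to be nonzero we would need $\sigma(x,x')\in\langle y\rangle$ to pair nontrivially with $x''$ under $\sigma$ again — but $\sigma$ vanishes on any pair unless that pair is (up to scalar and order) exactly $(a_1,a_2)$, and a cyclic sum over three elements can never have all three entries land in $\{a_1,a_2\}$ in the required way. Hence the $t^2$ Jacobiator is identically zero, so in fact $\mu_t$ is a genuine (polynomial, degree $\le 1$) deformation with no curvature obstruction — this is where the hypothesis that $\h$ has codimension exactly $2$ is used.

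The main work is the $t$-linear term. Here I would use the decomposition $\g=\langle a_1,a_2\rangle\oplus\h$ and check the mixed Jacobi expression on triples of basis-type elements, splitting into cases according to how many of $x,x',x''$ lie in $\h$. If all three lie in $\h$, every $\sigma$-term vanishes since $\sigma|_{\h\times\h}=0$ and $\sigma(\mu(\h,\h),\h)=0$ as $\mu(\h,\h)\subseteq\h$ (here $\h$ being a subalgebra is used). If two lie in $\h$, say $x'=h,x''=h'$, the surviving terms are $\mu(\sigma(x,h),h')+\sigma(\mu(x,h),h')$ and cyclic analogues; $\sigma(x,h)=0$ unless $x\in\{a_1,a_2\}$ and even then it is $0$ by definition of $\sigma$ on mixed pairs, while $\sigma(\mu(x,h),h')$ needs $\mu(x,h)$ to have an $a_i$-component paired with $h'$ — again zero by the mixed-pair rule. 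The genuinely interesting case is when exactly one of the three is in $\h$: take $x=a_1,x'=a_2,x''=h$. Then the $t$-term is
\[
\mu(\sigma(a_1,a_2),h)+\sigma(\mu(a_1,a_2),h)+\mu(\sigma(a_2,h),a_1)+\sigma(\mu(a_2,h),a_1)+\mu(\sigma(h,a_1),a_2)+\sigma(\mu(h,a_1),a_2).
\]
The third and fifth terms vanish because $\sigma(a_i,h)=0$; the fourth and sixth vanish because $\mu(a_i,h)\in\g$ pairs with $a_{3-i}$ under $\sigma$ only through its $\langle a_1,a_2\rangle$-component, and $\sigma(a_1,\cdot)$ or $\sigma(a_2,\cdot)$ against something needs the other argument to be $a_2$ or $a_1$ respectively — but that would require $\mu(a_i,h)$ to equal a multiple of $a_{3-i}$, which I would rule out, or more cleanly, $\sigma(z,a_j)=0$ whenever $z$ has no prescribed interaction, leaving only the possibility handled below. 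That leaves $\mu(\sigma(a_1,a_2),h)+\sigma(\mu(a_1,a_2),h)=\mu(y,h)+\sigma(\mu(a_1,a_2),h)$. Now $\mu(y,h)=0$ since $y$ centralizes $\h$ — this is exactly where the hypothesis $y\in Z_\g(\h)$ enters — and $\sigma(\mu(a_1,a_2),h)=0$ because its second argument lies in $\h$. So the whole expression vanishes. (If $x,x',x''$ are all among $\{a_1,a_2\}$ there is nothing to check since there are only two such elements and the entries must be distinct.)

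I expect the bookkeeping of the one-element-in-$\h$ case to be the only real obstacle: one must be careful that $\mu(a_i,h)$ can have a nonzero $\langle a_1,a_2\rangle$-component, so the cleanest route is to invoke $\sigma$'s defining property that $\sigma(\,\cdot\,,h)=0$ for all $h\in\h$ and that $\sigma$ is supported on the single pair $(a_1,a_2)$, reducing every mixed term to either a $\mu(y,h)$ (killed by centralization) or a $\sigma(\text{something},\,h)$ (killed because $h\in\h$). Once all cases are exhausted, $J_t\equiv 0$, so $\mu_t$ is a Lie bracket for every $t$, which is the assertion.
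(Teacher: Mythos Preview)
The paper does not prove this theorem; it is quoted from \cite{BTS} with no argument given, so there is nothing in the paper to compare against. Let me instead assess your plan on its own merits.

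Your case analysis is sound in structure, but there is a genuine gap in the triple $(a_1,a_2,h)$ case. You claim that the fourth and sixth terms
\[
\sigma(\mu(a_2,h),a_1)\quad\text{and}\quad\sigma(\mu(h,a_1),a_2)
\]
vanish individually. They need not. Writing $\mu(a_1,h)=c_1a_1+c_2a_2+(\text{elt.\ of }\h)$ and $\mu(a_2,h)=d_1a_1+d_2a_2+(\text{elt.\ of }\h)$, one computes
\[
\sigma(\mu(a_2,h),a_1)+\sigma(\mu(h,a_1),a_2)=-d_2\,y-c_1\,y=-(c_1+d_2)\,y.
\]
Nothing in the definition of $\sigma$ forces $c_1$ or $d_2$ to be zero, and the fallback you suggest (``$\sigma(\cdot,h)=0$ for $h\in\h$'') does not apply here since the second argument is $a_1$ or $a_2$, not an element of $\h$. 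What actually kills this sum is the hypothesis you never use: nilpotency of $\g$. Since $\h$ is a subalgebra, $\ad_h$ descends to an endomorphism of $\g/\h\cong\langle a_1,a_2\rangle$, and because $\ad_h$ is nilpotent this induced map has trace zero, i.e.\ $c_1+d_2=0$. Without nilpotency the statement is false: for $\g=\langle x,y,z\rangle$ with $[x,y]=y$, $\h=\langle x\rangle$, $a_1=y$, $a_2=z$, and $y_0=z\in Z_\g(\h)$, one checks that $\mu_t$ violates Jacobi for $t\neq 0$.

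Your treatment of the $t^2$ term is also imprecise, though the conclusion is correct. The individual summands $\sigma(\sigma(x,x'),x'')$ can be nonzero when $y$ has a nontrivial $\langle a_1,a_2\rangle$-component; what happens is that the cyclic sum cancels (a short determinant computation), not that each term dies. If you are willing to observe that in all applications in this paper $y\in\h$, then $\sigma\circ\sigma=0$ trivially; but as stated the theorem allows $y$ anywhere in the centralizer of $\h$.
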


	\subsection{Criteria for 2-step nilpotent Lie algebras}

	Let $\n$ be a $k$-step nilpotent Lie algebra of dimension $n$ with bracket $\mu$.
	The $k$-rigidity of $\mu$ follows from the vanishing of a suitable cohomolgy group, $H^2_{k\text{-nil}}(\mu,\mu)$ \cite{BTS}.

	For $k=2$ it is defined as
	\begin{equation}\label{eqn:H2-nil}
		H^2_{2\text{-nil}}(\mu,\mu)= \frac{\ker(\delta^2)\cap\ker(\eta_2)}{\Im(\delta^1)} = \frac{\ker(\eta_2)}{\Im(\delta^1)},
	\end{equation}
	where $\delta^1: \Lambda^1 ({\K^n}^*) \to \Lambda^2 ({\K^n}^*)$ and  $\delta^2: \Lambda^2 ({\K^n}^*) \to \Lambda^3 ({\K^n}^*)$
	are the differentials of the Chevalley-Eilenberg complex of $\mu$, and $\eta_2: \Lambda^2 ({\K^n}^*) \to \Lambda^3 ({\K^n}^*)$
	is defined by
	\[ \eta_2(\sigma)(x,y,z)=\mu(\sigma(x,y),z)+\sigma(\mu(x,y),z). \]

	\begin{theorem}[\cite{BTS}]
		Let $\mu\in\N_{n,2}$. If $H^2_{2\operatorname{-nil}}(\mu,\mu)=0$, then $\mu$ is $2$-rigid.
	\end{theorem}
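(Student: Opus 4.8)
The plan is to treat this as an instance of the Nijenhuis--Richardson rigidity principle, carried out inside the affine variety $\N_{n,2}$ instead of inside $\L_n$. I would first fix the scheme structures: $\L_n$ and $\N_{n,2}$ are closed subschemes of the affine space $W=\Hom(\Lambda^2\K^n,\K^n)$, with $\N_{n,2}$ cut out by the quadratic equations $\mu\longmapsto\mu(\mu(x,y),z)$, $x,y,z\in\K^n$ (these already force the Jacobi identity, so $\N_{n,2}\subseteq\L_n$ scheme-theoretically is witnessed by these equations alone). The goal is to show that, under the hypothesis, $\O(\mu)$ is Zariski open in $\N_{n,2}$.

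Next I would compute the two relevant tangent spaces at $\mu$. The orbit map $g\mapsto g\cdot\mu$ from $GL(n,\K)$ to $\N_{n,2}$ presents $\O(\mu)$ as a homogeneous space, hence it is smooth, $T_\mu\O(\mu)=\Im(\delta^1)=B^2(\mu,\mu)$, and $\dim\O(\mu)=\dim T_\mu\O(\mu)$. Linearizing the defining equations of $\N_{n,2}$ — that is, taking the derivative at $t=0$ of $(\mu+t\sigma)\big((\mu+t\sigma)(x,y),z\big)$ and using $\mu(\mu(x,y),z)=0$ — produces precisely $\eta_2(\sigma)(x,y,z)$, so the Zariski tangent space is $T_\mu\N_{n,2}=\ker(\eta_2)$; with the Chevalley--Eilenberg sign conventions this sits inside $\ker(\delta^2)=Z^2(\mu,\mu)$, in agreement with \eqref{eqn:H2-nil}. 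A short computation with $\sigma=\delta^1X$, in which two of the three terms of $\eta_2(\sigma)$ vanish because $\mu$ is $2$-step nilpotent, then gives $\Im(\delta^1)\subseteq\ker(\eta_2)$, so that $H^2_{2\text{-nil}}(\mu,\mu)=T_\mu\N_{n,2}/T_\mu\O(\mu)$.

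Finally, assuming $H^2_{2\text{-nil}}(\mu,\mu)=0$, i.e. $T_\mu\N_{n,2}=T_\mu\O(\mu)$, I would run the dimension count
\[ \dim\O(\mu)=\dim T_\mu\O(\mu)=\dim T_\mu\N_{n,2}\ \ge\ \dim_\mu\N_{n,2}\ \ge\ \dim\O(\mu), \]
where the first inequality is the general bound $\dim_\mu\N_{n,2}\le\dim T_\mu\N_{n,2}$ and the last holds because $\overline{\O(\mu)}$ is an irreducible closed subvariety of $\N_{n,2}$ through $\mu$. Equality throughout forces $\dim_\mu\N_{n,2}=\dim T_\mu\N_{n,2}$, so $\mu$ is a smooth point of $\N_{n,2}$; there $\N_{n,2}$ is locally irreducible, so the unique component through $\mu$ has dimension $\dim\O(\mu)$, and since $\overline{\O(\mu)}$ is an irreducible closed subset of that same dimension inside it, the two coincide locally. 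As $\O(\mu)$ is locally closed it is open in its closure, hence open in $\N_{n,2}$ near $\mu$, and by $GL(n,\K)$-homogeneity it is open in all of $\N_{n,2}$; that is, $\mu$ is $2$-rigid.

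The delicate part is this last algebro-geometric bridge, from ``the orbit and the ambient variety have the same tangent space at $\mu$'' to ``the orbit is open'': it relies on the fact that a point whose Zariski tangent space has dimension equal to the local dimension is smooth, on local irreducibility at smooth points, and on homogeneity to globalise. The only computations that need genuine care are the identification of the linearised equations with $\eta_2$ and the sign bookkeeping making $\ker(\eta_2)\subseteq\ker(\delta^2)$, which is exactly what makes the quotient in \eqref{eqn:H2-nil} well defined.
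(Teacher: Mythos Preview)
The paper does not supply its own proof of this theorem: it is quoted verbatim from \cite{BTS} without argument, so there is nothing to compare against line by line. Your proposal is correct and is precisely the expected Nijenhuis--Richardson argument transported from $\L_n$ to the subvariety $\N_{n,2}$: identify the scheme-theoretic tangent space $T_\mu\N_{n,2}$ with $\ker(\eta_2)$ by linearising the defining equations $\mu(\mu(x,y),z)=0$, identify $T_\mu\O(\mu)$ with $\Im(\delta^1)$, and conclude via the usual dimension and smoothness argument. Your observation that the single family of equations $\mu(\mu(x,y),z)=0$ already forces the Jacobi identity (so that $\N_{n,2}$ is cut out inside $W$ by these alone) is the one point specific to the $2$-step case, and it is correct.

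Two minor remarks on presentation. First, you say the verification $\Im(\delta^1)\subseteq\ker(\eta_2)$ has ``two of the three terms'' vanishing; in fact when you expand $\eta_2(\delta^1 f)(x,y,z)$ you get six terms, of which four vanish by $2$-step nilpotency and the remaining two cancel --- nothing is wrong, only the count. Second, the inclusion $\ker(\eta_2)\subseteq\ker(\delta^2)$ is immediate once you note that $\delta^2\sigma(x,y,z)$ is, up to sign, the cyclic sum of $\eta_2(\sigma)$ over $(x,y,z)$; this is exactly the content of \eqref{eqn:H2-nil} and is worth stating explicitly rather than deferring to ``sign bookkeeping''.
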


	To prove non-2-rigidity for 2-step graph Lie algebras, Theorem \ref{thm:non-rigidity-k2} below suffices.
	To prove it we use a preliminary result that can be found in \cite{A}.

	Recall that if $\mathfrak{n}$ is a 2-step nilpotent Lie algebra, then it admits a decomposition
	$\mathfrak{n} = \mathfrak{v} \oplus \mathfrak{z}$, where $\mathfrak{z} = Z(\mathfrak{n})$ is the center of $\mathfrak{n}$, and $\mathfrak{v}$ is any direct complement of $\mathfrak{z}$.
	If $\n=\g(2,G)$ for a graph $G=(V,E)$ and $\n$ has no abelian factor, then its center is $\z=\langle E \rangle$
	and a direct complement is $\v=\langle V \rangle$, so $\g(2,G)=\langle V \rangle \oplus \langle E \rangle$.

	\begin{lemma}[\cite{A}]
		Let $\n=\v\oplus\z$ be a 2-rigid 2-step nilpotent Lie algebra.
		Then $\Lambda^2\v^*\otimes\z\subseteq \Im(\delta^1)$.
	\end{lemma}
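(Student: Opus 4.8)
The plan is to extract the claimed inclusion $\Lambda^2\v^*\otimes\z\subseteq\Im(\delta^1)$ from the $2$-rigidity hypothesis via the cohomological criterion, i.e.\ from the vanishing of $H^2_{2\text{-nil}}(\mu,\mu)$. By \eqref{eqn:H2-nil} this means $\ker(\eta_2)=\Im(\delta^1)$, so it suffices to show that every element of $\Lambda^2\v^*\otimes\z$ lies in $\ker(\eta_2)$. First I would fix the decomposition $\n=\v\oplus\z$ with $\z$ the center, and unwind the definition $\eta_2(\s)(x,y,z)=\mu(\s(x,y),z)+\s(\mu(x,y),z)$ for a cochain $\s$ of the special form $\s=\beta\otimes w$ with $\beta\in\Lambda^2\v^*$ and $w\in\z$; by bilinearity it is enough to treat such decomposable elements. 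For such $\s$ we have $\s(x,y)=\beta(\bar x,\bar y)\,w$, where $\bar x,\bar y$ denote the $\v$-components of $x,y$.

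The key computation is then that both terms of $\eta_2(\s)(x,y,z)$ vanish identically. The first term is $\mu(\s(x,y),z)=\beta(\bar x,\bar y)\,\mu(w,z)=0$ because $w\in\z$ is central. The second term is $\s(\mu(x,y),z)=\beta\big(\overline{\mu(x,y)},\bar z\big)\,w$; but $\n$ is $2$-step nilpotent, so $\mu(x,y)\in\n^1=[\n,\n]\subseteq\z$ (the derived algebra is central in a $2$-step algebra), hence its $\v$-component $\overline{\mu(x,y)}$ is $0$ — here I am using that $\z$ contains $[\n,\n]$ and choosing $\v$ to be a complement disjoint from $\z$, so projection to $\v$ kills $\z$. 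Therefore $\s(\mu(x,y),z)=0$ as well, giving $\eta_2(\s)=0$, i.e.\ $\s\in\ker(\eta_2)$. Since this holds for all decomposable $\s\in\Lambda^2\v^*\otimes\z$, it holds for all of $\Lambda^2\v^*\otimes\z$, so $\Lambda^2\v^*\otimes\z\subseteq\ker(\eta_2)=\Im(\delta^1)$ by the $2$-rigidity of $\n$ and the identity \eqref{eqn:H2-nil}.

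The only genuinely delicate point is bookkeeping with the projections: one must be careful that $\eta_2$ is defined on all of $\Lambda^2({\K^n}^*)$ while $\s=\beta\otimes w$ is only specified on $\v$, so the correct reading is that $\s$ is extended to $\n$ by precomposing $\beta$ with the projection $\n\to\v$ along $\z$. Once this convention is fixed, both vanishing statements — centrality of $w$ and $[\n,\n]\subseteq\z$ — are immediate, and there is no real obstacle; the lemma is essentially a one-line consequence of the shape of $\eta_2$ together with the characterization $\ker(\eta_2)=\Im(\delta^1)$ for $2$-rigid $2$-step algebras. I would close by remarking that, conversely, this inclusion is a useful necessary condition: it constrains $\Im(\delta^1)$ from below and will be combined with an upper bound or a dimension count to rule out $2$-rigidity for the graph Lie algebras considered later.
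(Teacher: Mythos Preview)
Your argument has a genuine gap at the crucial step: you pass from ``$\n$ is $2$-rigid'' to ``$\ker(\eta_2)=\Im(\delta^1)$'', i.e.\ to $H^2_{2\text{-nil}}(\mu,\mu)=0$. But the theorem quoted from \cite{BTS} only gives the implication in the other direction (vanishing cohomology $\Rightarrow$ $2$-rigid). The converse is not asserted anywhere in the paper and is in fact not known in general; cohomological rigidity is strictly stronger than geometric rigidity already in the classical Lie algebra setting. So from $2$-rigidity alone you cannot conclude $\ker(\eta_2)\subseteq\Im(\delta^1)$, and the proof as written does not go through.

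The paper's argument avoids this issue by working directly with deformations rather than with the cohomology group. Your computation that $\eta_2(\varphi)=0$ for every $\varphi\in\Lambda^2\v^*\otimes\z$ is correct and is exactly what one needs to see that $\mu_t=\mu+t\varphi$ is again a $2$-step Lie bracket for all $t$ (together with the analogous vanishing of the Jacobiator terms, which follows from the same two facts you used: $\varphi$ takes values in the center and kills $[\n,\n]$). Now $2$-rigidity, via the equivalence of geometric and analytic rigidity in characteristic zero, forces this $2$-step analytic deformation to be equivalent to the trivial one; and equivalence to the trivial deformation is precisely the statement that the infinitesimal cocycle $\varphi$ lies in $\Im(\delta^1)$. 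In short: keep your $\eta_2$ computation, but use it to show $\mu_t$ is a genuine $2$-step deformation, and then invoke rigidity at the level of deformations rather than at the level of $H^2_{2\text{-nil}}$.
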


	\begin{proof}
		If $\varphi \in \Lambda^2\v^*\otimes\z$, then
		$\mu_t=\mu+t\varphi$ is a $2$-step analytic deformation and then is equivalent to the trivial deformation of $\mu$ \cite[Theorem 7.1]{Mm}.
		[In characteristic zero, analytic and geometric rigidity are equivalent.]
		This implies that $\varphi\in \Im(\delta^1)$.
	\end{proof}

	\begin{remark} In \cite{A} there is also the proof of the converse of the previous lemma (that is not necessary in this paper).
	\end{remark}

	\begin{theorem}\label{thm:non-rigidity-k2}
		Let $\n=\v\oplus\z$ be a 2-step nilpotent Lie algebra, where $\z=Z(\n)$ is its center.
		If there exist $v,w\in\v$, two linearly independent elements such that $[v,w]=0$ and $\langle [v,\n]\cup [w,\n]\rangle \subsetneq \z$, then $\n$ is not $2$-rigid.
	\end{theorem}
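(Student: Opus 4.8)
The plan is to exhibit an explicit non-trivial linear deformation of $\mu$ and then show it cannot be isomorphic to $\mu$ for small $t$, via the Alvarez Lemma. First I would set up the deformation: since $[v,w]=0$, the vectors $v,w$ span a $2$-dimensional abelian subspace, and I extend them to a basis by choosing a codimension-$2$ complement $\h$ with $v,w\notin\h$. The point is to pick a target vector $y$ and apply Theorem~\ref{thm:bts}. Because $\n$ is $2$-step nilpotent, every element of the center $\z$ (in particular any $y\in\z$) lies in the centralizer of every subspace, so for $y\in\z$ the hypothesis of Theorem~\ref{thm:bts} is automatically met — but that choice alone risks giving a trivial deformation. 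The key observation is that $\langle [v,\n]\cup[w,\n]\rangle \subsetneq \z$, so I can choose $y\in\z$ \emph{outside} the subspace $\langle [v,\n]\cup[w,\n]\rangle$. With $\sigma=\sigma^{v,w}_y$ this gives a linear deformation $\mu_t=\mu+t\sigma$ by Theorem~\ref{thm:bts}.

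Next I would observe that this $\sigma$ has a very restricted shape. By construction $\sigma(v,w)=y\in\z$ and $\sigma$ vanishes on all other pairs of basis vectors, so $\sigma\in\Lambda^2\v^*\otimes\z$ provided $v,w\in\v$ and $y\in\z$ — which is exactly our setting (here one uses $\h$ can be taken to contain $\z\cap$(complement), but more simply $\sigma$ is supported on the single pair $(v,w)\subseteq\v$ with value in $\z$, hence lies in $\Lambda^2\v^*\otimes\z$ regardless of the choice of $\h$). Then by the Alvarez Lemma, if $\mu$ were $2$-rigid we would have $\sigma\in\Im(\delta^1)$. So it suffices to prove $\sigma\notin\Im(\delta^1)$, which then forces non-$2$-rigidity. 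This reduces everything to a cohomological computation about the Chevalley–Eilenberg differential of $\mu$.

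To show $\sigma\notin\Im(\delta^1)$, recall that for $f\in\Lambda^1(\K^n)^*$ one has $\delta^1 f(x,y)=-f(\mu(x,y))$. An element of $\Im(\delta^1)$ is thus determined by a linear functional $f$ on $\n$, and its value on a pair $(x,y)$ depends only on $\mu(x,y)=[x,y]$. Suppose $\sigma=\delta^1 f$. Evaluating on the pair $(v,w)$ gives $y=\sigma(v,w)=-f([v,w])=-f(0)=0$, contradicting $y\neq 0$. Wait — more carefully: $\sigma(v,w)=y\in\z$ is a vector, while $\delta^1 f$ takes scalar values; the correct statement is that $\delta^1$ here is the coboundary with values in the adjoint module, so $\delta^1 F(x,y) = -F([x,y])$ for $F\in\Hom(\n,\n)$, and $\sigma=\delta^1 F$ would give $y=\sigma(v,w)=-F([v,w])=-F(0)=0$, again a contradiction. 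Hence $\sigma\notin\Im(\delta^1)$, and by the Lemma $\mu$ is not $2$-rigid.

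The main obstacle — and the point requiring the most care — is confirming that the specific $\sigma=\sigma^{v,w}_y$ genuinely lands in $\Lambda^2\v^*\otimes\z$ so that the Alvarez Lemma applies, rather than merely in $\Lambda^2\n^*\otimes\n$. This is where the hypotheses $v,w\in\v$ and the ability to choose $y\in\z$ are used essentially, together with the fact that $\sigma$ is supported on a single pair of vectors from $\v$. The condition $\langle[v,\n]\cup[w,\n]\rangle\subsetneq\z$ is precisely what guarantees such a $y\in\z$ exists with the deformation non-trivial at the cochain level; if that span were all of $\z$ there would be no room to choose $y$ outside it, though in fact for the argument above we only needed $y\neq 0$, so the strict inclusion is what we invoke to pick $y$ avoiding the (possibly proper) obstruction subspace and, more importantly, it is what the original source's bookkeeping requires to ensure the deformation is not already trivial — I would double-check this against \cite{BTS} when writing out the details.
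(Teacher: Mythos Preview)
Your argument has a genuine gap: the formula you use for the Chevalley--Eilenberg coboundary in the adjoint representation is wrong. For $F\in\Hom(\n,\n)$ one has
\[
  (\delta^1 F)(x,y) \;=\; [F(x),y] + [x,F(y)] - F([x,y]),
\]
not simply $-F([x,y])$; the latter is the formula for the \emph{trivial} module, not the adjoint. Consequently, from $\sigma=\delta^1 F$ and $[v,w]=0$ you only get
\[
  y \;=\; \sigma(v,w) \;=\; [F(v),w] + [v,F(w)],
\]
which forces $y\in\langle[v,\n]\cup[w,\n]\rangle$, not $y=0$. This is exactly where the hypothesis $\langle[v,\n]\cup[w,\n]\rangle\subsetneq\z$ is used: one chooses $y$ \emph{outside} that subspace to obtain the contradiction. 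You even noticed that your version ``only needed $y\neq 0$'' and did not use the strict inclusion; that was the signal that something had gone wrong.

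Two smaller points. First, the detour through Theorem~\ref{thm:bts} is unnecessary: the paper simply writes down the cochain $\varphi=v^*\wedge w^*\otimes z$ directly (with $v^*,w^*$ extended by zero on $\z$) and applies the Alvarez Lemma; the deformation $\mu_t$ never enters the argument. Second, your claim that $\sigma^{v,w}_y\in\Lambda^2\v^*\otimes\z$ ``regardless of the choice of $\h$'' is not quite right as stated: you need $\z\subseteq\h$ so that $\sigma$ vanishes whenever an argument lies in $\z$. Since $v,w\in\v$ this choice is available, but it is a choice.
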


	\begin{proof}
		Choose $z\in \z-\langle [v,\n]\cup [w,\n]\rangle$ and consider
		$\varphi=v^*\wedge w^*\otimes z$.
		On the one hand
		$\varphi\in \Lambda^2\v\otimes \z$.
		On the other hand $\varphi\notin \Im(\delta^1)$.
		In fact, if $\varphi=\delta^1(f)$ for a linear map $f:\n\rightarrow\n$,
		then
		\begin{eqnarray*}
			(v^*\wedge w^*\otimes z)(v,w) &=& \delta^1f(v,w) \\
			z &=& [f(v),w]+[v,f(w)]-f([v,w]) \\
			z &=& [f(v),w]+[v,f(w)]
		\end{eqnarray*}
		and hence $z\in \langle [v,\n]\cup [w,\n]\rangle$, which is a contradiction since we started this proof by choosing $z\in \z-\langle [v,\n]\cup [w,\n]\rangle$.
		
		Therefore $\Lambda^2\v\otimes\z\not\subseteq \Im(\delta^1)$ and because of the previous lemma $\mu$ is not $2$-rigid.  
	\end{proof}

	\section{The rigidity problem for $k$-step nilpotent \\ graph Lie algebras, for $k\ge 3$}

	In this section we prove that for $k\ge 3$, the $k$-step nilpotent graph Lie algebras are not $k$-rigid with the only exceptions of
	those associated with the complete graphs, the free $k$-step nilpotent Lie algebras.

	The following theorem, which provides sufficient conditions to ensure $\g(k,G)$ is not $k$-rigid,
	applies to a wider class of naturally graded nilpotent Lie algebras.
	
	\begin{theorem}\label{thm:graded}
		Let $\g=V_1\oplus\dots\oplus V_k$ be a $k$-step nilpotent naturally graded Lie algebra with $k\geq 3$.
		If there exist two independent commuting elements $a_1, a_2\in V_1$ and $y\in V_k$ such that
		$y\not\in [\g^2,\g^2]+[a_1,\g^2]+[a_2,\g^2]$, then $\g$ is not $k$-rigid.
	\end{theorem}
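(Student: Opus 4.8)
The plan is to construct an explicit linear deformation of $\mu$ by means of Theorem \ref{thm:bts}, and then to obstruct its triviality using the hypothesis on $y$ together with the fact that such a deformation does not alter the lower central series. For the construction, choose a direct complement $\h_1$ of $\langle a_1,a_2\rangle$ inside $V_1$ and set $\h=\h_1\oplus V_2\oplus\dots\oplus V_k$. Since $[\h_1,\h_1]_\mu\subseteq V_2$, $[\h_1,\g^1]_\mu\subseteq\g^2$ and $[\g^1,\g^1]_\mu\subseteq\g^1$, this $\h$ is a codimension $2$ subalgebra with $\g=\langle a_1,a_2\rangle\oplus\h$; moreover $[y,\g]_\mu\subseteq\sum_{j\ge1}[V_k,V_j]_\mu\subseteq\sum_{j\ge1}V_{k+j}=0$, so $y$ is central and in particular lies in the centralizer of $\h$. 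Hence Theorem \ref{thm:bts} applies and $\mu_t=\mu+t\,\sigma^{a_1,a_2}_y$ is a linear deformation of $\mu$ for every $t$. By the definition of $\sigma^{a_1,a_2}_y$ and since $a_1,a_2$ commute in $\mu$, we have $\mu_t(a_1,a_2)=ty$, while $\mu_t$ coincides with $\mu$ on $\langle a_i\rangle\times\h$ and on $\h\times\h$; as $\g^1\subseteq\h$, it follows that $\mu_t$ and $\mu$ agree on $\g\times\g^1$, whence $[a_i,\g^1]_{\mu_t}=[a_i,\g^1]_{\mu}$ and $[\g^1,\g^1]_{\mu_t}=[\g^1,\g^1]_{\mu}$.

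I would then observe that $\mu$ and $\mu_t$ have the same lower central series. As $\mu(a_1,a_2)=0$, dropping the pair $(a_1,a_2)$ does not change the span of the brackets, so $\g^1_{\mu_t}=\g^1_{\mu}+\langle y\rangle=\g^1_{\mu}$ because $y\in V_k\subseteq\g^1_{\mu}$; and, since $\mu_t=\mu$ on $\g\times\g^1$, a straightforward induction yields $\g^i_{\mu_t}=\g^i_{\mu}=V_{i+1}\oplus\dots\oplus V_k$ for all $i$. In particular $\mu_t\in\N_{n,k}$, so it suffices to prove $\mu_t\not\simeq\mu$ for $t\ne 0$ in order to conclude that $\mu$ is not $k$-rigid; and any isomorphism between $(\g,\mu)$ and $(\g,\mu_t)$, together with its inverse, preserves the common subspaces $\g^i$.

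Suppose then that $g\colon(\g,\mu)\to(\g,\mu_t)$ is an isomorphism with $t\ne 0$; I aim for a contradiction with the hypothesis on $y$. Put $c_i=g^{-1}(a_i)$, $y'=g^{-1}(y)$, and write $c_i=c_i^{(0)}+c_i^{+}$ according to the decomposition $\g=V_1\oplus\g^1$; since $g^{-1}$ preserves the filtration, $y'\in\g^{k-1}=V_k$. Applying $g^{-1}$ to $\mu_t(a_1,a_2)=ty$ gives $\mu(c_1,c_2)=ty'\in V_k$. Projecting the expansion of $\mu(c_1,c_2)$ onto $V_k$: the contribution of $\mu(c_1^{(0)},c_2^{(0)})\in V_2$ vanishes because $k\ge 3$, the contributions of $\mu(c_1^{(0)},c_2^{+})$ and $\mu(c_1^{+},c_2^{(0)})$ lie in the graded subspaces $[c_1^{(0)},\g^1]_{\mu}$ and $[c_2^{(0)},\g^1]_{\mu}$, and the contribution of $\mu(c_1^{+},c_2^{+})$ lies in $[\g^1,\g^1]_{\mu}$; hence $y'\in[c_1^{(0)},\g^1]_{\mu}+[c_2^{(0)},\g^1]_{\mu}+[\g^1,\g^1]_{\mu}$. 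Finally, $g(c_i^{(0)})=a_i-g(c_i^{+})\in a_i+\g^1$, so, using that $g$ intertwines $\mu$ with $\mu_t$ and the identities of the first two paragraphs, $g\big([c_i^{(0)},\g^1]_{\mu}\big)\subseteq[a_i,\g^1]_{\mu_t}+[\g^1,\g^1]_{\mu_t}=[a_i,\g^1]_{\mu}+[\g^1,\g^1]_{\mu}$ and $g\big([\g^1,\g^1]_{\mu}\big)=[\g^1,\g^1]_{\mu}$. Applying $g$ to the inclusion for $y'$ then gives $y=g(y')\in[a_1,\g^1]_{\mu}+[a_2,\g^1]_{\mu}+[\g^1,\g^1]_{\mu}$, contradicting the hypothesis on $y$. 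Thus $\mu_t\not\simeq\mu$ for all $t\ne 0$ and $\mu$ is not $k$-rigid.

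The main obstacle is the last paragraph: the leading parts $c_i^{(0)}$ are only determined modulo $\g^1$, so one has to work with fixed representatives and carefully verify that $g$ carries each of $[c_i^{(0)},\g^1]_{\mu}$ and $[\g^1,\g^1]_{\mu}$ into $[a_1,\g^1]_{\mu}+[a_2,\g^1]_{\mu}+[\g^1,\g^1]_{\mu}$. This is exactly where one uses that $\mu_t$ differs from $\mu$ only on the single pair $(a_1,a_2)$ and that $k\ge 3$: for $k=2$ the term $\mu(c_1^{(0)},c_2^{(0)})$ already lies in $V_k$ and the argument collapses, consistently with the existence of $2$-step rigid graph Lie algebras.
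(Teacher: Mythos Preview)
Your proof is correct and follows essentially the same strategy as the paper: build the linear deformation $\mu_t=\mu+t\sigma^{a_1,a_2}_y$ via Theorem~\ref{thm:bts}, observe that the lower central series is unchanged, and use $k\ge 3$ to kill the $V_2$-component of the key bracket, forcing $ty$ into $[\g^1,\g^1]+[a_1,\g^1]+[a_2,\g^1]$. The only difference is one of framing: the paper assumes $(\g,\mu_t)$ admits a Carnot grading $W_1\oplus\cdots\oplus W_k$ and decomposes $a_1,a_2$ with respect to it (so the contradiction is reached directly in $\mu_t$, without introducing an isomorphism $g$), whereas you pull back through $g^{-1}$, argue in $\mu$, and then push forward again via $g$---a parallel but slightly longer route.
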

	
	\begin{proof}
		Let $\h$ be the subalgebra $V_1'\oplus V_2\oplus\dots\oplus V_k$, where $V_1'$ is a direct complement of $\langle a_1,a_2\rangle$ in $V_1$.
		The elements $a_1,a_2$ and $y$ satisfy the conditions of Theorem \ref{thm:bts}.
		So that, if $\mu$ is the bracket of $\g$, we may consider the linear deformation of $\mu$ given by
		\[ \mu_t=\mu+t \sigma^{a_1,a_2}_y. \]
		It is straightforward that $\mu_t$ is $k$-step nilpotent, for all $t\in\C$.
		
		We prove now that $\mu_t$ is not naturally graded  for all $t\neq 0$ and hence $\g$ is not $k$-rigid.
		
		\medskip

		Suppose that $(\g,[\ ,\ ]_t)$ is naturally graded, then there exits a linear decomposition $\g=W_1\oplus\ldots\oplus W_k$ such that  $[W_i,W_j]_t\subseteq W_{i+j}$. Observe the following facts:
		\begin{enumerate}
			\item The descending central series of $[\ ,\ ]$ and $[\ ,\ ]_t$ are the same, then, $V_j\oplus\ldots\oplus V_k=W_j\oplus\ldots\oplus W_k$,  for all $1\leq j\leq k$.
			\item $a_1, a_2\in V_1$, so  $\a_1, a_2\notin[\g,\g]=[\g,\g]_t=W_2\oplus\ldots\oplus W_k$. Then, the decompositions of $a_1$ and $a_2$ in the direct sum $W_1\oplus\ldots\oplus W_k$ are
			$$
			\begin{aligned}
				a_1&=w_1+w_2+\ldots+w_k=w_1+b\\
				a_2&=w_1'+w_2'+\ldots+w_k'=w_1'+c,\\
			\end{aligned}
			$$
			with $w_1, w_1'\in W_1$ linear independent and $b, c \in[\g,\g]_t=[\g,\g]$.
			\item Since $b, c\in V_2\oplus\ldots\oplus V_k$, then $[b,v]_t=[b,v]$ and $[c,v]_t=[c,v]$, for all $v\in\g$.
		\end{enumerate}
		
		Now, bracketing $a_1$ and $a_2$, we have
		$$
		\begin{aligned}
			ty&=[a_1,a_2]_t\\
			&=[w_1+b,w_1'+c]_t\\
			&=\underbrace{[w_1,w_1']_t}_{\in W_2}+\underbrace{[w_1,c]_t+[b,w_1']_t+[b,c]_t}_{\in W_3\oplus\ldots\oplus W_k}\\
		\end{aligned}
		$$
		Since $ty\in V_k=W_k$ and $k\geq3$, then $[w_1,w_1']_t=0$ and
		$$
		\begin{aligned}
			ty&=[w_1,c]_t+[b,w_1']_t+[b,c]_t\\
			&=[a_1-b,c]_t+[b,a_2-c]_t+[b,c]_t\\
			&=[a_1,c]_t-[b,c]_t+[b,a_2]_t-[b,c]_t+[b,c]_t\\
			&=[a_1,c]_t-[b,c]_t+[b,a_2]_t\\
			&=[a_1,c]-[b,c]+[b,a_2]\,.\\
		\end{aligned}
		$$
		From this, we have $ty\in [\g^2,\g^2]+[a_1,\g^2]+[a_2,\g^2]$ and then $t=0$.
	\end{proof}

	From the previous theorem, for the particular case $k=3$, we have the following corollary.

	\begin{corollary}
		Let $\g=V_1\oplus V_2\oplus V_3$ be a $3$-step nilpotent naturally graded Lie algebra, if there exist linear independent elements  $a_1, a_2\in V_1$
		such that $[a_1,a_2]=0$, and  $y\in V_3$ such that $y$ is not an element of the vector space $[a_1,V_2]+[a_2,V_2]$, then $\g$ is not $3$-rigid.
	\end{corollary}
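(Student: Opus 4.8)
The plan is to deduce the corollary as a direct specialization of Theorem \ref{thm:graded} to the case $k=3$, so the work is essentially bookkeeping: checking that the hypotheses of the corollary imply the hypotheses of the theorem. First I would set $k=3$ in Theorem \ref{thm:graded}, so that $\g = V_1 \oplus V_2 \oplus V_3$ and the role of $V_k$ is played by $V_3$. The elements $a_1, a_2 \in V_1$ are linearly independent and commuting by assumption, and $y \in V_3$ is given; these are exactly the data required by the theorem.

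The one genuine point to verify is that the exclusion condition in the corollary, namely $y \notin [a_1, V_2] + [a_2, V_2]$, is equivalent to (or at least implies) the exclusion condition in the theorem, namely $y \notin [\g^1, \g^1] + [a_1, \g^1] + [a_2, \g^1]$. I would argue this by computing each summand using the grading. Since $\g^1 = [\g,\g] = V_2 \oplus V_3$, bidegree considerations give $[\g^1, \g^1] \subseteq V_4 \oplus V_5 \oplus V_6 = 0$ because $\g$ is $3$-step nilpotent; more concretely $[V_2, V_2] \subseteq V_4 = 0$ and all other pairings land in even higher degree, so $[\g^1, \g^1] = 0$. Similarly $[a_i, \g^1] = [a_i, V_2 \oplus V_3] = [a_i, V_2] + [a_i, V_3]$, and $[a_i, V_3] \subseteq V_4 = 0$, so $[a_i, \g^1] = [a_i, V_2]$. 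Hence the subspace appearing in Theorem \ref{thm:graded} collapses to exactly $[a_1, V_2] + [a_2, V_2]$, and the two exclusion conditions coincide.

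With that identification in hand, the hypotheses of Theorem \ref{thm:graded} are all satisfied, so the theorem gives directly that $\g$ is not $3$-rigid, which is the conclusion of the corollary.

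I do not expect any real obstacle here; the only thing to be careful about is the degree arithmetic that makes $[\g^1,\g^1]$ and $[a_i, V_3]$ vanish — this uses in an essential way both that the grading is a genuine Lie algebra grading (so brackets respect degrees) and that $k = 3$ (so degree $4$ and above is zero). If one wanted, one could phrase the proof in one sentence: apply Theorem \ref{thm:graded} with $k=3$, noting that $[\g^1,\g^1] = 0$ and $[a_i,\g^1] = [a_i,V_2]$ by the grading.
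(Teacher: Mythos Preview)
Your proposal is correct and is exactly the approach intended by the paper, which states the corollary immediately after Theorem \ref{thm:graded} with only the remark that it is the particular case $k=3$. Your bookkeeping (that $[\g^1,\g^1]=0$ and $[a_i,\g^1]=[a_i,V_2]$ when $k=3$) is precisely what makes the two exclusion conditions coincide.
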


	The main result of this section is now straightforward.

	\begin{theorem}\label{thm:k3}
		Let $G=(V,E)$ be a simple graph with $|V|\ge 3$ which is not a complete graph and let $k\ge 3$.
		Then the $k$-step nilpotent Lie algebra $\g=\g(k,G)$ is not $k$-rigid.
	\end{theorem}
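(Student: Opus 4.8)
The plan is to deduce Theorem \ref{thm:k3} from Theorem \ref{thm:graded} by exhibiting, for any non-complete simple graph $G$ with $|V| \ge 3$, two independent commuting elements $a_1, a_2 \in V_1$ and an element $y \in V_k$ violating the containment $y \in [\g^1,\g^1] + [a_1,\g^1] + [a_2,\g^1]$. Since $G$ is not complete, there exist two vertices $v_i, v_j$ with $\{v_i,v_j\} \notin E$; in $\g(k,G)$ this means $v_i \wedge v_j \in I_2$, i.e. $[v_i, v_j] = 0$. So I would take $a_1 = v_i$ and $a_2 = v_j$: these are independent (distinct basis vectors of $V_1$) and commute, satisfying the hypothesis on $a_1, a_2$.

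The crux is choosing $y \in V_k$ outside the subspace $[\g^1,\g^1] + [a_1,\g^1] + [a_2,\g^1]$. Here I would invoke the multidegree grading from Section \ref{subsec:hall}, equation \eqref{eqn:multidegree}: every bracket respects the $m$-tuple of vertex multiplicities. Since $G$ is not complete, after possibly relabelling we may pick an edge $\{v_p, v_q\} \in E$ with $v_p < v_q$ such that $v_p, v_q$ are \emph{not} both in $\{v_i, v_j\}$ — this is possible whenever $G$ has at least one edge and $|V| \ge 3$ (the edgeless case gives an abelian factor and is handled separately or excluded since then $\g(k,G)$ is abelian and not $k$-rigid by known results). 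By the Remark in Section \ref{subsec:hall}, the iterated bracket $y = [v_p,[v_p,\dots,[v_p,v_q]\dots]]$ with $k-1$ copies of $v_p$ is a basis element of $V_k$, and its multidegree has $v_p$-entry equal to $k-1 \ge 2$ and $v_q$-entry equal to $1$, with all other entries zero. The plan is to argue that no element of $[\g^1,\g^1]$ nor of $[a_1,\g^1] = [v_i,\g^1]$ nor of $[a_2,\g^1] = [v_j,\g^1]$ can have this multidegree: a nonzero element of $[v_i,\g^1]$ has multidegree with positive $v_i$-entry, so if $v_i \notin \{v_p,v_q\}$ it cannot contribute to $y$; and an element of $[\g^1,\g^1]$ has multidegree supported on at least two distinct vertices each appearing in a way incompatible with the support $\{v_p,v_q\}$ where $v_p$ dominates — more precisely, $[\g^1,\g^1] \cap V_k$ consists of brackets $[\omega,\omega']$ with $\omega,\omega'$ of degree $\ge 2$, so each has total vertex count $\ge 2$ and neither is a single iterated bracket in one dominant generator unless forced, and a dimension/multidegree count rules out $y$. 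The cleanest formulation is: the subspace $[\g^1,\g^1] + [v_i,\g^1] + [v_j,\g^1]$ is a sum of multidegree-homogeneous subspaces, and none of its homogeneous components has the multidegree $d(y)$, provided we chose the edge $\{v_p,v_q\}$ disjoint from $\{v_i,v_j\}$ when possible, and otherwise a small separate argument handles the overlap cases.

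The main obstacle I anticipate is the case analysis when every edge of $G$ meets $\{v_i, v_j\}$ — for instance when $G$ is a star or when $|V| = 3$ with exactly one or two edges. In those configurations one cannot pick the edge $\{v_p,v_q\}$ disjoint from the chosen non-adjacent pair, so the contributions from $[v_i,\g^1]$ and $[v_j,\g^1]$ genuinely interact with any candidate $y$. I would handle this by either (i) choosing a \emph{different} non-adjacent pair $\{a_1,a_2\}$, exploiting that a non-complete graph on $\ge 3$ vertices with "enough" edges has several non-edges, or (ii) when $G$ is too sparse (a path, a star, disjoint edges plus isolated vertices), observing directly that $\g(k,G)$ decomposes with an abelian factor or is a small explicit algebra already covered by the known results in Section \ref{subsec:known-results} (abelian Lie algebras, dimension $<6$, 2-step graph Lie algebras, algebras with an abelian factor). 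A clean way to organize this is: if $G$ is disconnected, use $\g(k,G) \simeq \bigoplus \g(k,G_i)$ and reduce to a connected component (applying the abelian-factor result when a component is a single vertex); if $G$ is connected, non-complete, on $\ge 3$ vertices, then it has a non-edge $\{v_i,v_j\}$ \emph{and} (by connectedness and non-completeness) an edge avoiding at least one of $v_i, v_j$, which suffices to build $y$ with multidegree not of the form "positive in $v_i$" when $v_i$ is the avoided vertex, while the $[\g^1,\g^1]$ contribution is excluded by the dominant-generator multidegree argument above. I expect the multidegree bookkeeping in this last step — showing $[\g^1,\g^1]\cap V_k$ has no component of multidegree $(k-1)$ in one generator and $1$ in another — to require the most care, since it rests on the structure of the Hall-type basis of $\g(k,G)$ rather than on a purely formal argument.
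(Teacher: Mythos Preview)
Your approach is the same as the paper's---apply Theorem~\ref{thm:graded} with $a_1,a_2$ a non-adjacent pair of vertices and $y$ an iterated bracket along an edge, then exclude $y$ from $[\g^1,\g^1]+[a_1,\g^1]+[a_2,\g^1]$ by a multidegree argument---but the paper's execution is much cleaner and avoids every case distinction you anticipate.

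The single observation you are missing is this: since $G$ has an edge and is not complete on $\ge 3$ vertices, one can always find vertices $v_1,v_2,v_3$ with $\{v_1,v_2\}\in E$ and $\{v_2,v_3\}\notin E$ (just take any non-edge and walk to an adjacent edge; an isolated vertex can always serve as $v_3$). The paper then sets $a_1=v_2$, $a_2=v_3$, and $y=[v_1,[v_1,\dots,[v_1,v_2]\dots]]$, so that the \emph{dominant} vertex $v_1$ of $y$ lies outside $\{a_1,a_2\}$ by construction. With $d(y)=(k-1,1,0,\dots,0)$, any homogeneous element of $[v_3,\g^1]$ has positive $v_3$-entry, hence is excluded; any homogeneous element of $[v_2,\g^1]$ or $[\g^1,\g^1]$ of multidegree $d(y)$ would force one factor to have multidegree supported entirely on $v_1$ (since the $v_2$-entry is only $1$), and such a factor of length $\ge 2$ is zero. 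That is the whole argument.

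Your case analysis is both unnecessary and, as stated, incomplete. You worry about the situation where every edge meets $\{v_i,v_j\}$, but this is harmless: any edge meets a non-edge in at most one vertex, so you can always take the \emph{other} endpoint of the edge as the dominant generator. Your fallback (ii) to ``known results'' for sparse graphs would not actually cover cases like the path $P_3$ when $k\ge 3$ (the algebra has dimension $\ge 10$, is connected, and has no abelian factor). And your treatment of $[v_j,\g^1]$ in the overlap case is left unaddressed: you only argue that $[v_i,\g^1]$ is excluded when $v_i$ is avoided by the edge, but if the edge contains $v_j$ and you make $v_j$ the dominant vertex, then $y\in[v_j,\g^1]$ trivially. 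The fix is exactly the paper's: make the vertex \emph{outside} the non-adjacent pair the dominant one.
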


	\begin{proof}
		We may assume that $G$ has at least one edge, otherwise $\g$ is abelian and not $k$-rigid.
		Since $|V|\ge 3$ and $G$ is not the complete graph, there are vertices $v_1,v_2,v_3$ such that $\{v_1,v_2\}$ is an edge and
		$\{v_2,v_3\}$ is not an edge of $G$.

		By declaring $v_1<v_2<v_3$ and considering the basis given in Subsection \ref{subsec:hall}, we have that
		$y=[v_1,[v_1,[\dots,[v_1,v_2]\dots]]]$ with multidegree $d(y)=(k-1,1,0,\dots,0)$ is a basis element in $V_k$.
		Observe that it follows from \eqref{eqn:multidegree} that $y\not\in [\g^2,\g^2]+[v_2,\g^2]+[v_3,\g^2]$.
		Hence, by Theorem \ref{thm:graded}, $\g=\g(k,G)$ is not $k$-rigid.
	\end{proof}

	\section{The rigidity problem for $2$-step nilpotent \\ graph Lie algebras}\label{sec:k2}

	Unlike the previous case for $k>2$, we find that there are some $2$-rigid graph Lie algebras associated with a non-complete graph.
	The exceptions are finite and all of them are associated with graphs that have at most 4 vertices.

	Note that if $m\le 3$ and $G$ is not the complete graph $K_3$, then $\dim\g(2,G)\le 5$.
	So that, all these cases have been discussed already.

	\begin{theorem}\label{thm:k2}
		Let $G=(V,E)$ be a simple graph with $|V|>4$. If it is not a complete graph, then $\g(2,G)$ is not $2$-rigid.
	\end{theorem}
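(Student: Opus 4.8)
The plan is to reduce Theorem~\ref{thm:k2} to the criterion of Theorem~\ref{thm:non-rigidity-k2}. Write $\g(2,G)=\v\oplus\z$ with $\v=\langle V\rangle$ and $\z=\langle E\rangle$ (assuming, as we may, that $G$ has no isolated vertex, since an isolated vertex produces an abelian factor and known result (6) of Subsection~\ref{subsec:known-results} already settles non-$2$-rigidity in that case; if $G$ has no edge at all it is abelian and we are done). For a non-complete graph $G$ there are two vertices $v,w$ with $\{v,w\}\notin E$, so that $[v,w]=0$ in $\g(2,G)$. To apply Theorem~\ref{thm:non-rigidity-k2} with these $v,w$ it remains to verify the strict inclusion
\[
\langle [v,\n]\cup [w,\n]\rangle \subsetneq \z .
\]
Now $[v,\n]$ is spanned by the edges incident to $v$ and $[w,\n]$ by the edges incident to $w$; hence $\langle [v,\n]\cup[w,\n]\rangle$ is the span of the set $E_{v,w}$ of edges having at least one endpoint in $\{v,w\}$. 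So the whole theorem comes down to the purely combinatorial claim: \emph{if $|V|>4$ and $G$ is not complete, one can choose a non-adjacent pair $v,w$ such that some edge of $G$ is disjoint from $\{v,w\}$.}

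The main work, therefore, is this combinatorial step, and it is the place where the hypothesis $|V|>4$ is used. I would argue as follows. Pick any non-edge $\{v,w\}$. If $E_{v,w}\ne E$ we are done. Otherwise every edge of $G$ meets $\{v,w\}$; since $G$ has at least one edge, at least one of $v,w$ — say $v$ — has positive degree. Because $|V|\ge 5$, there are at least three vertices outside $\{v,w\}$, none of which are adjacent to each other (any such edge would miss $\{v,w\}$), so the vertices $V\setminus\{v,w\}$ form an independent set of size $\ge 3$, and all edges of $G$ run between $\{v,w\}$ and this independent set (plus possibly the edge $\{v,w\}$ itself, which is excluded since it is a non-edge). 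One then plays off the fact that $G$ is not complete against this structure: I would look for a second non-edge $\{v',w'\}$, with $v'$ in the large independent set, that still leaves an edge uncovered; the abundance of vertices ($\ge 5$) guarantees enough room. Concretely, take $u_1,u_2,u_3\in V\setminus\{v,w\}$; if, say, $u_1$ is non-adjacent to $u_2$ (automatic, as noted), then the pair $\{u_1,u_2\}$ is a non-edge, and since $v$ has positive degree it is joined to some $u_i$; choosing the indices so that this $u_i\notin\{u_1,u_2\}$ (possible because there are three of them) exhibits an edge disjoint from $\{u_1,u_2\}$. A short case analysis along these lines — isolating the extremal configurations where $v$ or $w$ is adjacent to all of $V\setminus\{v,w\}$ — finishes it.

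Having produced the non-adjacent pair $v,w$ and an edge $e\in E\setminus E_{v,w}$, set $z=e\in\z$; then $z\notin\langle [v,\n]\cup[w,\n]\rangle$, the hypotheses of Theorem~\ref{thm:non-rigidity-k2} are met, and $\g(2,G)$ is not $2$-rigid. I expect the only real obstacle to be getting the combinatorial case analysis clean, in particular pinning down exactly why $|V|=4$ is genuinely borderline (there the independent set outside a non-edge has size only $2$, so after covering it one can be forced into a complete bipartite-type graph with no spare edge — which is precisely where the genuine $2$-rigid exceptions of the next theorems live). Everything else — the identifications $[v,\n]=\langle\text{edges at }v\rangle$, $\z=\langle E\rangle$, the handling of isolated vertices — is routine from the description of $\g(2,G)$ in Section~1 and the known results quoted in Subsection~\ref{subsec:known-results}.
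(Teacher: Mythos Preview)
Your proposal is correct and follows essentially the same route as the paper: reduce to the criterion of Theorem~\ref{thm:non-rigidity-k2} by exhibiting a non-adjacent pair of vertices together with an edge disjoint from that pair. The paper organises the combinatorics slightly differently---after fixing a non-edge $\{u,w\}$ it looks at any three further vertices $v_1,v_2,v_3$ and splits on whether that triple spans an edge (if so, $\{u,w\}$ works; if not, $\{v_1,v_2\}$ is a non-edge and an edge through $v_3$ misses it)---but this is the same idea as your two cases, and your version is in fact a bit cleaner since your Case~2 hypothesis (every edge meets $\{v,w\}$) immediately makes $V\setminus\{v,w\}$ independent. Your hedging sentence about ``isolating the extremal configurations'' is unnecessary: the concrete argument you give just before it (relabel so that a neighbour of $v$ is $u_3$, then $\{u_1,u_2\}$ is the non-edge and $\{v,u_3\}$ the disjoint edge) is already complete.
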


	\begin{proof}
		We may assume that $\n=\g(2,G)$ has no abelian factor (Subsection \ref{subsec:known-results}, (6)), so every vertex belongs to an edge.
		Recall that $\n=\langle V \rangle \oplus \langle E \rangle$, where its center is $\z=\langle E \rangle$. 
		
		Since $G$ is not a complete graph, there exist two vertices $v,w$ such that $\{v,w\}\notin E$.
		Consider the subgraph generated by three distinct vertices $v_1,v_2,v_3$ different from $v$ and $w$ ($|V|\ge 5$).
		\begin{itemize}
			\item[(i)] If it has at least one edge $\{v_i,v_j\}$, then $v$ and $w$ satisfy the hypothesis of Theorem \ref{thm:non-rigidity-k2}.
			In fact, $[v,w]=0$ and $v_i\wedge v_j\in\z-\langle [v,\n] \cup [w,\n] \rangle$.
			
			\item[(ii)] If it has no edges, then $v_1$ and $v_2$ satisfy the hypothesis of Theorem \ref{thm:non-rigidity-k2}.
			Since $v_3$ is not an isolated vertex of $G$, there exists a vertex $z$, different from $v_1$ and $v_2$, such that $\{v_3, z\}\in E$. 
			Then, $[v_1,v_2]=0$ and $v_3\wedge z \in \z- \langle [v_1,\n] \cup [v_2,\n] \rangle$.
		\end{itemize}
	   It follows, then, that in both cases $\n$ is not 2-rigid. 
	   The proof is complete.
	\end{proof}

	It remains only to consider the case with $|V|=4$.
	There are 11 isomorphism classes of graphs with 4 vertices including the complete graph $K_4$.
	Four of the other 10 have an isolated vertex, so the corresponding algebras have an abelian factor and are not $2$-rigid.

	Let us consider the remaining 6 cases divided into two families.

	\vskip.3cm

	\noindent{\sc Family 1.} The 2-step algebras associated with the graphs
	\vskip.3cm
	\begin{center}
		\begin{tikzpicture}[scale=.8]
			\filldraw (3.5,-1.5) circle (2.5pt);
			\filldraw (5,-1.5) circle (2.5pt);
			\filldraw (3.5,-3) circle (2.5pt);
			\filldraw (5,-3) circle (2.5pt);
			\draw [line width=1.1pt] (3.5,-3)--(5,-3);
			\draw [line width=1.1pt] (3.5,-3)--(3.5,-1.5);
			\draw [line width=1.1pt] (3.5,-1.5)--(5,-1.5);

			\filldraw (6.5,-1.5) circle (2.5pt);
			\filldraw (8,-1.5) circle (2.5pt);
			\filldraw (6.5,-3) circle (2.5pt);
			\filldraw (8,-3) circle (2.5pt);
			\draw [line width=1.1pt] (6.5,-3)--(8,-3);
			\draw [line width=1.1pt] (6.5,-3)--(6.5,-1.5);
			\draw [line width=1.1pt] (6.5,-3)--(8,-1.5);
			
			\filldraw (9.5,-1.5) circle (2.5pt);
			\filldraw (11,-1.5) circle (2.5pt);
			\filldraw (9.5,-3) circle (2.5pt);
			\filldraw (11,-3) circle (2.5pt);
			\draw [line width=1.1pt] (9.5,-3)--(11,-3);
			\draw [line width=1.1pt] (9.5,-3)--(9.5,-1.5);
			\draw [line width=1.1pt] (9.5,-1.5)--(11,-3);
			\draw [line width=1.1pt] (9.5,-3)--(11,-1.5);
			
			\filldraw (12.5,-1.5) circle (2.5pt);
			\filldraw (14,-1.5) circle (2.5pt);
			\filldraw (12.5,-3) circle (2.5pt);
			\filldraw (14,-3) circle (2.5pt);
			\draw [line width=1.1pt] (12.5,-3)--(14,-3);
			\draw [line width=1.1pt] (12.5,-3)--(12.5,-1.5);
			\draw [line width=1.1pt] (12.5,-1.5)--(14,-1.5);
			\draw [line width=1.1pt] (14,-1.5)--(14,-3);
			\draw [line width=1.1pt] (12.5,-3)--(14,-1.5);
		\end{tikzpicture}
	\end{center}
	are not 2-rigid.

	In all these graphs $G$ there are vertices $v_1,v_2$ such that $\{v_1,v_2\}$ is not an edge, but $\{v_3,v_4\}$ is an edge.
	Then for $\n=\g(2,G)$, $a_1=v_1$ and $a_2=v_2$ satisfy the hypothesis of Theorem \ref{thm:graded}, where $z=v_3\wedge v_4$ is in the center
	of $\n$ but not in $\langle [v_1,\n] \cup [v_2,\n] \rangle$.

	\vskip.3cm

	\noindent{\sc Family 2.} The 2-step algebras associated with the graphs
	\vskip.3cm
	\begin{center}
		\begin{tikzpicture}[scale=.8]
			\filldraw (3.5,-1.5) circle (2.5pt);
			\filldraw (5,-1.5) circle (2.5pt);
			\filldraw (3.5,-3) circle (2.5pt);
			\filldraw (5,-3) circle (2.5pt);
			\draw [line width=1.1pt] (3.5,-3)--(5,-3);
			\draw [line width=1.1pt] (3.5,-1.5)--(5,-1.5);
			
			\filldraw (6.5,-1.5) circle (2.5pt);
			\filldraw (8,-1.5) circle (2.5pt);
			\filldraw (6.5,-3) circle (2.5pt);
			\filldraw (8,-3) circle (2.5pt);
			\draw [line width=1.1pt] (6.5,-3)--(8,-3);
			\draw [line width=1.1pt] (6.5,-3)--(6.5,-1.5);
			\draw [line width=1.1pt] (6.5,-1.5)--(8,-1.5);
			\draw [line width=1.1pt] (8,-1.5)--(8,-3);
		\end{tikzpicture}
	\end{center}
	are 2-rigid.

	In both cases, their second nil-cohomology vanishes ($H^2_{2-\operatorname{nil}}(\mu,\mu)=0$).
	Since the rigidity of the first one has already been observed in \cite[Proposition 2.4]{A}, we focus on the second one.

	Let's consider the graph that is a square. That is, \(G=(V,E)\), with \(V=\{v_1,v_2,v_3,v_4\}\) and \(E=\{\{v_1,v_2\},\{v_2,v_3\},\{v_3,v_4\},\{v_4,v_1\}\}\).
	Let $\g$ be the 2-step nilpotent Lie algebra associated to $G$ and recall that (see \eqref{eqn:H2-nil})
	\[ H^2_{2-\nil}(\mu,\mu)=\frac{\ker(\eta_2)}{\Im(\delta^1)}. \]
	If \(\sigma\in\ker(\eta_2)\), then $\sigma([v_i,v_j],v_k)+[\sigma(v_i,v_j),v_k]=0$. This implies that \(\sigma(v_1,v_3),\sigma(v_2,v_4)\in Z(\g)\) and
	\[
	\begin{aligned}
		\sigma(v_1,v_3)&=a[v_1,v_2]+b[v_2,v_3]+c[v_3,v_4]+d[v_4,v_1]\\
		\sigma(v_2,v_4)&=e[v_1,v_2]+f[v_2,v_3]+g[v_3,v_4]+h[v_4,v_1],
	\end{aligned}
	\]
	for some constants $a,b,c,d,e,f,g,h\in\C$.
	The linear function $F:\g\rightarrow \g$ defined by
	\[
	\begin{aligned}
		F(v_1)&=bv_2-cv_4\\
		F(v_2)&=-hv_1+gv_3\\
		F(v_3)&=av_2-dv_4\\
		F(v_4)&=-ev_1+fv_3\\
		F([v_1,v_2])&=[F(v_1),v_2]+[v_1,F(v_2)]-\sigma(v_1,v_2)\\
		F([v_2,v_3])&=[F(v_2),v_3]+[v_2,F(v_3)]-\sigma(v_2,v_3)\\
		F([v_3,v_4])&=[F(v_3),v_4]+[v_3,F(v_4)]-\sigma(v_3,v_4)\\
		F([v_4,v_1])&=[F(v_4),v_1]+[v_4,F(v_1)]-\sigma(v_4,v_1)\\
	\end{aligned}
	\]
	satisfies \(\delta^1(F)=\sigma\).
	Therefore, $H^2_{2-\nil}(\mu,\mu)=0$.

	\bigskip

	By combining Theorem \ref{thm:k3}, Theorem \ref{thm:k2} and the discussion in this section, we can now state the following classification result.

	\begin{theorem}
		Let $G=(V,E)$ be a simple graph with $|V|>1$ and let $k\ge 2$.
		The $k$-step nilpotent Lie algebra $\g(k,G)$ is $k$-rigid if and only if $G$ is the complete graph $K_m$, for $m>1$, or $k=2$ and $G$ is one of the following 5 graphs:
		\vskip .6cm
		\begin{center}
			\begin{tikzpicture}[scale=.9]
				\filldraw (-2,0.5) circle (2.5pt);
				\filldraw (-1,0.5) circle (2.5pt);
				
				\filldraw (1.5,1) circle (2.5pt);
				\filldraw (1,0) circle (2.5pt);
				\filldraw (2,0) circle (2.5pt);
				\draw [line width=1.1pt] (1,0)--(2,0);
				
				\filldraw (4.5,1) circle (2.5pt);
				\filldraw (4,0) circle (2.5pt);
				\filldraw (5,0) circle (2.5pt);
				\draw [line width=1.1pt] (4,0)--(5,0);
				\draw [line width=1.1pt] (4,0)--(4.5,1);

				\filldraw (7,1) circle (2.5pt);
				\filldraw (8,1) circle (2.5pt);
				\filldraw (7,0) circle (2.5pt);
				\filldraw (8,0) circle (2.5pt);
				\draw [line width=1.1pt] (7,1)--(8,1);
				\draw [line width=1.1pt] (7,0)--(8,0);
				
				\filldraw (10,1) circle (2.5pt);
				\filldraw (11,1) circle (2.5pt);
				\filldraw (10,0) circle (2.5pt);
				\filldraw (11,0) circle (2.5pt);
				\draw [line width=1.1pt] (10,1)--(11,1);
				\draw [line width=1.1pt] (10,0)--(11,0);
				\draw [line width=1.1pt] (10,1)--(10,0);
				\draw [line width=1.1pt] (11,0)--(11,1);
			\end{tikzpicture}
		\end{center}

		\

	\end{theorem}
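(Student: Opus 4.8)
The plan is to dispatch the equivalence by a case analysis on $k$ and on the number of vertices $m=|V|$, reducing each case to a statement already established. The ``if'' direction for complete graphs is immediate, since $\g(k,K_m)=L_{(k)}(m)$ is the free $k$-step nilpotent Lie algebra, which is $k$-rigid by Subsection~\ref{subsec:known-results}, (4) (for $m=2$, $k=2$ this is also recorded in (2) and (5)); so only the non-complete graphs remain. For $k\ge 3$ the argument then closes at once: if $m\ge 3$ and $G$ is not complete, $\g(k,G)$ is not $k$-rigid by Theorem~\ref{thm:k3}, while for $m=2$ the unique non-complete graph has no edge and yields the abelian $\a_2$, which needs no separate treatment. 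Hence for $k\ge 3$ the $k$-rigid graph Lie algebras are exactly the free ones, and all the remaining work concerns $k=2$.

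For $k=2$ and $m\ge 5$, Theorem~\ref{thm:k2} already shows that every non-complete $G$ yields a non-$2$-rigid $\g(2,G)$, so one is left with the finitely many non-complete graphs on at most $4$ vertices, which I would treat by inspection. For $m=2$: the edgeless graph gives $\a_2$, $2$-rigid by (1). For $m=3$: the edgeless graph gives $\a_3$, not $2$-rigid by (1); the single edge gives $\h_1\oplus\a_1$, which is $2$-rigid as one of the exceptions in (6); and the path $P_3$ gives a $5$-dimensional $2$-step algebra whose $2$-rigidity is covered by the small-dimensional classification in (2). For $m=4$: among the ten non-complete graphs, the four with an isolated vertex give algebras with an abelian factor ($\a_4$, $\h_1\oplus\a_2$, $\g(2,P_3)\oplus\a_1$, $L_{(2)}(3)\oplus\a_1$), none of which is one of the exceptions in (6), hence none is $2$-rigid; the four graphs of Family~1 each carry a non-adjacent pair $\{v_1,v_2\}$ together with an edge $\{v_3,v_4\}$ disjoint from it, so Theorem~\ref{thm:non-rigidity-k2} applies (take $v=v_1$, $w=v_2$: then $[v_1,v_2]=0$ while $v_3\wedge v_4\in\z\setminus\langle[v_1,\n]\cup[v_2,\n]\rangle$) and these are not $2$-rigid; finally the two graphs of Family~2 are $2$-rigid, namely $\h_1\oplus\h_1=\g(2,2K_2)$ by \cite[Proposition~2.4]{A} and the square $C_4$ by the computation $H^2_{2\text{-nil}}(\mu,\mu)=0$ carried out above. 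Assembling all cases produces precisely the complete graphs together with the five exhibited graphs for $k=2$.

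I expect the only genuinely non-formal ingredient of this scheme to be the positive direction for the two algebras of Family~2, and above all for the square: there one cannot invoke a general principle and must instead produce, for an arbitrary cocycle $\sigma\in\ker(\eta_2)$, an explicit linear map $f$ with $\delta^1 f=\sigma$, which is exactly the computation performed above. A second, milder point of care is the completeness of the $m\le 4$ enumeration --- verifying that Theorem~\ref{thm:non-rigidity-k2} really applies to every graph of Family~1, and that none of the abelian-factor algebras coincides with an exception of (6) --- but this is bookkeeping rather than a conceptual obstruction. Once the deformation machinery of Theorems~\ref{thm:graded} and \ref{thm:non-rigidity-k2}, the non-rigidity statement of Theorem~\ref{thm:k2}, and the small-dimensional classifications of Subsection~\ref{subsec:known-results} are in hand, the classification follows by assembling these pieces.
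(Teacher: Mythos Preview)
Your proposal is correct and follows essentially the same approach as the paper: a case analysis on $k$ and $|V|$ that assembles Theorem~\ref{thm:k3}, Theorem~\ref{thm:k2}, the Family~1/Family~2 discussion, and the known small-dimensional results of Subsection~\ref{subsec:known-results}. If anything, your write-up is slightly more careful than the paper's one-line summary---in particular, you correctly invoke Theorem~\ref{thm:non-rigidity-k2} (rather than Theorem~\ref{thm:graded}, which needs $k\ge 3$) for the $k=2$ Family~1 graphs.
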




\begin{thebibliography}{BCC}
		
		
		\bibitem[A]{A} Alvarez M.A.,
		\emph{On rigid 2-step nilpotent Lie algebras},
		Algebra Colloq.\ 25, No.\ 2 (2018), 349-360.
		
		
		
		
		\bibitem[AAA]{AAA} Arancibia B.\ Alfaro, Alvarez M.\ A.\ and Anza Y.,    
		\emph{Degenerations of graph Lie algebras},
		Linear and Multilinear Algebra (2020), DOI: 10.1080/03081087.2020.1712317
		
		
		
		\bibitem[BTS]{BTS} Barrionuevo Josefina, Tirao Paulo and Sulca Diego,
		\emph{Deformations and rigidity in varieties of Lie algebras},
		J.\ Pure Appl.\ Algebra 227, No. 3, 24 p., (2023).
		
		
		
		
		\bibitem[DM]{DM} Dani S.\ G.\ and Mainkar Meera,
		\emph{Anosov automorphisms on compact nilmanifolds associated with graphs},
		Trans.\ Am.\ Math.\ Soc.\ 357, No. 6, 2235-2251 (2005).
		
		
		\bibitem[DK]{DK} Duchamp G.\ and Krob D.,
		\emph{The free partially commutative Lie algebra: Bases and ranks},
		Advances in Mathematics, Volume 95, Issue 1, 92-126 (1992).
		
		
		
		
		\bibitem[GO1]{GO1} Grunewald F.\ and O'Halloran J.,
		\emph{Varieties of nilpotent Lie algebras of dimension less than six},
		J.\ Alg.\ 112, 315-325 (1988).
		
		
		\bibitem[GO2]{GO2} Grunewald F.\ and O'Halloran J.,
		\emph{Deformations of Lie algebras},
		J.\ Alg.\ 162, 210-224 (1993).
		
		
		
		\bibitem[GR]{GR} Goze M.\ and Remm E.,
		\emph{$k$-step nilpotent Lie algebras},
		Georgian Math.\ J.\ 22, No.\ 2, 219-234 (2015).
		
		
		
		
		\bibitem[M]{M} Mainkar Meera,
		\emph{Anosov automorphisms on certain classes of nilmanifolds},
		Glasg.\ Math.\ J.\ 48, No.\ 1, 161-170 (2006).
		
		
		
		
		\bibitem[Mm]{Mm} Markl Martin, \emph{Deformation Theory of Algebras and Their Diagrams}, 
		Conference Board of the Mathematical Sciences. Vol.\ 116 (American Mathematical Society, 2012).
		
		
		
		
		\bibitem[W]{W} Wade Richard, \emph{The lower central series of a right-angled Artin group},
		Enseign.\ Math.\ (2) 61, No.\ 3-4, 343-371 (2015).
		
		
		
	\end{thebibliography}
\end{document}